\theoremstyle{plain}
\newtheorem{thm}{Theorem}
\newtheorem*{mainthm}{Main Theorem}
\newtheorem{prop}[thm]{Proposition}
\newtheorem{lemma}[thm]{Lemma}
\theoremstyle{definition}
\newtheorem*{defn*}{Definition}
\newtheorem*{question*}{Question}
\newtheorem{example}[thm]{Example}
\newtheorem*{example*}{Example}
\newtheorem{rem}[thm]{Remark}
\newtheorem*{rem*}{Remark}
\newcommand{\field}[1]{\mathbb{#1}}
\newcommand{\N}{\field{N}}
\newcommand{\ideal}[1]{\mathfrak{#1}}
\newcommand{\m}{\ideal{m}}
\newcommand{\ia}{\ideal{a}}
\newcommand{\ra}{\rightarrow}
\DeclareMathOperator{\ann}{ann}
\newcommand{\be}{\begin{enumerate}}
\newcommand{\ee}{\end{enumerate}}
\newcommand{\li}
 {\leftfootline}
\renewcommand{\phi}{\varphi}
\author{Neil Epstein}
\address{Department of Mathematical Sciences \\ George Mason University \\ Fairfax, VA  22030}
\email{nepstei2@gmu.edu}
\date{February 21, 2021}
\title{Regularity and intersections of bracket powers}
\subjclass[2010]{Primary: 13A35, Secondary: 13B40, 13H05} 
\keywords{regular ring, Ohm-Rush, intersection flat, bracket powers, Frobenius endomorphism}
\begin{document}
\begin{abstract}
    Among reduced Noetherian prime characteristic  commutative rings, we prove that a regular ring is precisely one where finite intersection of ideals commutes with taking bracket powers. However, reducedness is essential for this equivalence. Connections are made with Ohm-Rush content theory, intersection-flatness of the Frobenius map, and various flatness criteria.  
\end{abstract}

\maketitle

In this note, all rings are unital, associative, and  (unless otherwise specified) commutative.

The condition of being \emph{regular} is paramount in the theory of commutative Noetherian rings, due in part to its connections to algebraic geometry.  Hence an easily tested algebraic condition for it is always welcome.  In pursuance of this, we offer:

\begin{mainthm}\label{mainthm}
Let $R$ be a Noetherian reduced commutative ring of prime characteristic $p>0$.  Then the following are equivalent: \begin{enumerate}
 \item\label{it:mainreg} $R$ is regular.
 \item\label{it:mainwOR} Finite intersection of ideals commutes with taking bracket powers.
 \item\label{it:captwo} For any ideal $I$ and any $x\in R$, we have $I^{[p]} \cap (x^p) = (I \cap (x))^{[p]}$.
 \item\label{it:colon} For any ideal $I$ and any $x\in R$, we have $(I :x)^{[p]} = (I^{[p]} : x^p)$.
\end{enumerate}
\end{mainthm}

The use of the Frobenius to detect important properties of prime characteristic rings, including regularity, the complete intersection property, and Gorensteinnness, is not new.  We refer the reader to Miller's survey \cite{Mil-Frobhd}. 
Indeed, the equivalence of (\ref{it:mainreg}) and (\ref{it:colon}) of the above theorem was previously proved by Zhang \cite{Zh-Frobcolon}, using completely different methods from the ones employed here.

Recall that for a power $p^n$ of $p$, the $p^n$th \emph{bracket power} of an ideal $I$ is the ideal generated by $\{x^{p^n} : x\in I\}$.  Kunz \cite{Kunz-regp} showed\footnote{Kunz only stated this for $R$ reduced, but as explained in tbe footnote to \cite[Theorem 3.6]{nmeYao-flat}, this is an unnecessary assumption when the result is stated this way.} that for a commutative Noetherian ring $R$ of prime characteristic $p>0$, $R$ is regular if and only if the Frobenius endomorphism $R \rightarrow R$, $r \mapsto r^p$, is \emph{flat}.  Indeed, the proof of our main theorem is really about flatness, so we commence with the \emph{Bourbaki flatness criterion}, a criterion that this author feels should be better known than it is.

\begin{prop}\label{prop:Bourflat}\cite[Exercise I.2.22]{Bour-CA}
Let $R$ be a ring, not necessarily commutative, and let $M$ be a left $R$-module.  Then $M$ is $R$-flat if and only if $(IM :_M x) = (I :_R x)M$ for all finitely generated right ideals $I$ and every $x\in R$, where for a subset $L \subseteq M$, $(L :_M x) := \{z \in M : xz \in L\}$.
\end{prop}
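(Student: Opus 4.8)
The plan is to derive the criterion from the Tor characterization of flatness: a left module $M$ is flat if and only if $\Tor_1^R(R/I,M)=0$ for every finitely generated right ideal $I$, equivalently the natural map $I\otimes_R M\to M$ is injective. The linchpin of both directions is a single short exact sequence built from left multiplication by $x$. Fix a finitely generated right ideal $I$ and an element $x\in R$. Left multiplication $\lambda_x\colon R\to R$, $r\mapsto xr$, is a homomorphism of \emph{right} $R$-modules, and it descends to an injection $\bar\lambda_x\colon R/(I:_R x)\into R/I$, $r+(I:_R x)\mapsto xr+I$, with image $(xR+I)/I$. This yields the short exact sequence of right $R$-modules
\[
0\to R/(I:_R x)\xrightarrow{\bar\lambda_x} R/I\to R/(xR+I)\to 0.
\]
Applying $-\otimes_R M$ and using the standard identification $(R/L)\otimes_R M\cong M/LM$ for a right ideal $L$, the map $\bar\lambda_x\otimes 1$ becomes $\mu_x\colon M/(I:_R x)M\to M/IM$, $m+(I:_R x)M\mapsto xm+IM$. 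The key observation is that $\mu_x$ is injective precisely when $(IM:_M x)\subseteq (I:_R x)M$; since the reverse inclusion $(I:_R x)M\subseteq (IM:_M x)$ holds for any module directly from the definitions, injectivity of $\mu_x$ is equivalent to the desired colon identity.

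For the forward direction I would suppose $M$ is flat, so that $-\otimes_R M$ preserves the exactness of the displayed sequence; then $\bar\lambda_x\otimes 1=\mu_x$ is injective, giving the colon identity for every $I$ and $x$. For the converse I would assume the colon identity for all finitely generated right ideals and all $x$, and prove $\Tor_1^R(R/I,M)=0$ by induction on the number of generators of $I$, with the trivial base case $I=0$. For the inductive step, write $I=I'+xR$ with $I'$ generated by one fewer element; the long exact $\Tor$ sequence attached to $0\to R/(I':_R x)\to R/I'\to R/I\to 0$ reads
\[
\Tor_1^R(R/I',M)\to \Tor_1^R(R/I,M)\xrightarrow{\partial} (R/(I':_R x))\otimes_R M\xrightarrow{\mu_x} (R/I')\otimes_R M.
\]
By the inductive hypothesis $\Tor_1^R(R/I',M)=0$, so $\partial$ embeds $\Tor_1^R(R/I,M)$ into $\ker\mu_x$; but $\ker\mu_x=0$ by the colon hypothesis applied to $I'$ and $x$. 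Hence $\Tor_1^R(R/I,M)=0$, completing the induction and showing $M$ is flat.

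The step I expect to be most delicate is the noncommutative bookkeeping: verifying that $\lambda_x$ respects the right-module structure, that $(I:_R x)$ is a right ideal so the first sequence is genuinely a sequence of right modules, and that the isomorphism $(R/L)\otimes_R M\cong M/LM$ carries $\bar\lambda_x\otimes 1$ exactly to $\mu_x$. Once these identifications are pinned down, both implications follow formally from the $\Tor$ machinery, and the reduction to finitely generated ideals is exactly what allows the single-generator induction to run.
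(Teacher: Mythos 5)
The paper gives no proof of this proposition at all---it is quoted as \cite[Exercise I.2.22]{Bour-CA}---so there is nothing internal to compare against; judged on its own, your argument is correct and complete. The reduction of the colon identity to injectivity of $\mu_x = \bar\lambda_x\otimes 1$ is right (the reverse inclusion $(I:_Rx)M\subseteq(IM:_Mx)$ is indeed automatic, and $\ker\mu_x=(IM:_Mx)/(I:_Rx)M$), the forward direction is immediate from exactness, and the induction on generators via the long exact sequence for $0\to R/(I':_Rx)\to R/I'\to R/(I'+xR)\to 0$ correctly yields $\Tor_1^R(R/I,M)=0$ for every finitely generated right ideal, which is the standard flatness criterion (valid for noncommutative $R$). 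The noncommutative bookkeeping you flag as delicate does check out: $\lambda_x$ is right-linear because $x(rs)=(xr)s$, and $(I:_Rx)$ is a right ideal for the same reason. This is essentially the intended solution to the Bourbaki exercise, and it would serve as a self-contained proof where the paper offers only a citation.
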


Next, we have the \emph{Hochster-Jeffries flatness criterion}.

\begin{thm}\label{thm:HJflat}
Let $R$ be a ring, not necessarily commutative, and $M$ a left $R$-module.  Then $M$ is flat over $R$ if and only if \begin{enumerate}
\item\label{HJcap} For any finitely generated right ideal $I$ of $R$ and any $x\in R$, we have $(IM \cap xM) = (I \cap xR)M$, and
\item\label{HJann} For any $x\in R$, we have $(0 :_M x) = (0 :_R x)M$.
\end{enumerate}
\end{thm}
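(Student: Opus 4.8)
The plan is to pivot off the Bourbaki criterion (Proposition~\ref{prop:Bourflat}), which says that $M$ is flat precisely when
\[
(IM :_M x) = (I :_R x)M
\]
for every finitely generated right ideal $I$ and every $x \in R$. The entire argument then reduces to manipulating such colon modules, and throughout I would lean on two formal facts that hold with no hypotheses: the inclusion $(I :_R x)M \subseteq (IM :_M x)$ (since $xr \in I$ forces $x(rm) = (xr)m \in IM$) and the containment $(0 :_R x) \subseteq (I :_R x)$.

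For the forward implication I would assume $M$ flat and read off (\ref{HJann}) immediately by specializing the Bourbaki equality to $I = 0$. For (\ref{HJcap}) the inclusion $(I \cap xR)M \subseteq IM \cap xM$ is automatic, and for the reverse I would take $z = xw \in IM \cap xM$, observe that $xw \in IM$ means $w \in (IM :_M x) = (I :_R x)M$ by Bourbaki, and then expand $w = \sum_i r_i m_i$ with $xr_i \in I$, so that $z = \sum_i (xr_i) m_i$ visibly lies in $(I \cap xR)M$.

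The substantive direction is the converse, where I would verify the Bourbaki equality from (\ref{HJcap}) and (\ref{HJann}); only the inclusion $(IM :_M x) \subseteq (I :_R x)M$ needs work. Given $z$ with $xz \in IM$, I note that $xz \in IM \cap xM$, so (\ref{HJcap}) rewrites $xz = \sum_i a_i m_i$ with $a_i \in I \cap xR$. Writing $a_i = x r_i$ and setting $w := \sum_i r_i m_i \in (I :_R x)M$, I obtain $x(z - w) = 0$; then (\ref{HJann}) places $z - w$ in $(0 :_M x) = (0 :_R x)M \subseteq (I :_R x)M$, whence $z \in (I :_R x)M$. Flatness then follows from Bourbaki.

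I expect no deep obstacle: the content is entirely in the converse, and even there it is just a matter of peeling an $x$ off the generators $a_i \in xR$ and then absorbing the resulting $x$-torsion via (\ref{HJann}). The one point demanding care is the noncommutative bookkeeping. I must keep $I$ a right ideal throughout, so that $(I :_R x)$ is again a right ideal and the products $IM$, $(I\cap xR)M$ are the intended images of the relevant tensor products; the crucial manipulations $xz = x\sum_i r_i m_i$ use only the left action of $R$ on $M$ and are therefore insensitive to noncommutativity.
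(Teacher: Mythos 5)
Your proof is correct and follows essentially the same route as the paper: both directions pivot on the Bourbaki criterion, and your converse argument (peeling $x$ off the generators of $I \cap xR$ and then absorbing the $x$-torsion via (\ref{HJann})) is exactly an element-level unwinding of the paper's chain $x(IM :_M x) = IM \cap xM = (I \cap xR)M = x(I :_R x)M$. The only cosmetic difference is in the forward direction for (\ref{HJcap}), where the paper tensors the exact sequence $0 \to R/(I\cap xR) \to (R/I)\oplus(R/xR)$ with $M$ rather than invoking Bourbaki a second time.
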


\begin{rem}
The above is proved by Hochster and Jeffries in \cite[Proposition 5.5, iv $\implies$ i]{HoJe-flatness} in case $R$ is commutative, though this is not how they state it.  Bourbaki flatness makes the proof simpler though, and it allows the passage to noncommutative rings.
\end{rem}

\begin{proof}[Proof of Theorem~\ref{thm:HJflat}]
First suppose $M$ is flat over $R$.  Then (\ref{HJcap}) is well known (by applying $- \otimes_R M$ to the exact sequence of right $R$-modules $0 \ra R/(I \cap xR) \ra (R/I) \oplus (R/xR)$), and (\ref{HJann}) follows from Bourbaki flatness (Proposition~\ref{prop:Bourflat}).

Conversely, suppose (\ref{HJcap}) and (\ref{HJann}) hold.  Let $I$ be a finitely generated right ideal and $x\in R$.  Then we have \[
x(IM :_M x) = IM \cap xM = (I \cap xR)M = x(I :_R x) M.
\]
It follows that \[
IM :_M x = (I :_R x)M + (0 :_M x) = (I :_R x) M + (0 :_R x) M = (I :_R x) M.
\]
Another appeal to Bourbaki flatness finishes the proof.
\end{proof}

\begin{lemma}\label{lem:domain}
Let $R$ be a (commutative) integral domain and $M$ a torsion-free $R$-module such that for any finitely generated ideal $I$ of $R$ and any $x\in R$, we have $(I \cap xR)M = IM \cap xM$.  Then $M$ is flat over $R$.
\end{lemma}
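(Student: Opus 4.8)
The plan is to recognize this lemma as a direct specialization of the Hochster--Jeffries flatness criterion (Theorem~\ref{thm:HJflat}) to the case of a domain together with a torsion-free module. That criterion requires two conditions: the intersection condition (\ref{HJcap}) and the annihilator condition (\ref{HJann}). Since $R$ is commutative, ``right ideal'' just means ``ideal,'' and hypothesis (HJcap) is granted to us verbatim in the statement of the lemma. So the entire content of the proof reduces to verifying the annihilator condition (\ref{HJann}), namely that $(0 :_M x) = (0 :_R x)M$ for every $x \in R$.

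First I would dispose of the case $x = 0$: here $(0 :_R 0) = R$ and $(0 :_M 0) = M$, so the required equality reads $M = RM = M$, which is immediate. The substantive case is $x \neq 0$. Because $R$ is an integral domain, a nonzero element $x$ is a nonzerodivisor, so $(0 :_R x) = 0$, and the right-hand side $(0 :_R x)M$ is the zero submodule. For the left-hand side, I would invoke torsion-freeness of $M$ directly: if $z \in M$ satisfies $xz = 0$ with $x \neq 0$, then torsion-freeness forces $z = 0$, whence $(0 :_M x) = 0$. This establishes (HJann) in all cases.

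Having verified both hypotheses of Theorem~\ref{thm:HJflat}, I would conclude that $M$ is flat over $R$, completing the proof. I do not anticipate any genuine obstacle here: the lemma is essentially a clean packaging of the Hochster--Jeffries criterion, with the domain hypothesis trivializing $(0 :_R x)$ and the torsion-free hypothesis trivializing $(0 :_M x)$ in exact parallel. The only point requiring any care is simply remembering to treat the $x = 0$ case separately, since there the annihilators are the whole ring and the whole module rather than zero.
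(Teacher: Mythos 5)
Your proposal is correct and follows exactly the paper's own argument: both reduce to the Hochster--Jeffries criterion, note that condition (\ref{HJcap}) is the hypothesis, and verify condition (\ref{HJann}) by splitting into the cases $x=0$ (where both annihilators are everything) and $x\neq 0$ (where the domain hypothesis kills $(0:_R x)$ and torsion-freeness kills $(0:_M x)$). No differences worth noting.
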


\begin{proof}
By Hochster-Jeffries flatness (Theorem~\ref{thm:HJflat}), it is enough to show that $\ann_R(a)M = (\ann_M a)$ for all $a\in R$.  But if $a=0$, then $\ann_R(a) = R$ and $\ann_M(a) = M$, whence $\ann_R(a)M = RM = M = \ann_Ma$.  On the other hand, if $a\neq 0$, then $\ann_R(a)=0$, and by torsion-freeness we also have $\ann_M(a) = 0$, so $\ann_R(a)M = 0M = 0 = \ann_M(a)$.
\end{proof}

Next, note the following connection between zero-divisors and intersection of bracket powers of principal ideals.

\begin{lemma}\label{lem:zd}
Let $(R,\m)$ be a commutative Noetherian local ring of prime characteristic $p>0$.  Suppose that for all $x,y \in R$, we have $((x^p) \cap (y^p)) = ((x) \cap (y))^{[p]}$.  Then every nonzero zero-divisor in $R$ has a nonzero nilpotent multiple.
\end{lemma}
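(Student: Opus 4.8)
The plan is to feed a nonzero zero-divisor $z$ into the hypothesis together with the companion element $z+v$, where $v$ is any nonzero element of $\ann_R(z)$ (such a $v$ exists since $z$ is a zero-divisor). In characteristic $p$ the Frobenius is additive, so $(z+v)^{q}=z^{q}+v^{q}$ for every power $q=p^{e}$, and $v^{q}z=v^{q-1}(vz)=0$; hence $z^{q+1}=(z+v)^{q}z$ lies in $(z^{q})\cap\bigl((z+v)^{q}\bigr)$. To use this for all $q$ at once, I would first note that the hypothesis self-improves: applying the assumed identity to the pair $(x^{q},y^{q})$ and inducting on $e$ (using $(I^{[p^{e}]})^{[p]}=I^{[p^{e+1}]}$) yields $(x^{q})\cap(y^{q})=((x)\cap(y))^{[q]}$ for all $x,y\in R$ and every $q=p^{e}$.

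Assume first that $z\notin(z+v)$, and set $K=((z+v):_R z)$, a proper ideal, so that $(z)\cap(z+v)=zK$ by the elementary identity $(a)\cap(b)=a\,((b):_R a)$. Since bracket powers pull principal factors out, $((z)\cap(z+v))^{[q]}=z^{q}K^{[q]}$, and the self-improved hypothesis gives
\[
z^{q+1}\in (z^{q})\cap\bigl((z+v)^{q}\bigr)=z^{q}K^{[q]}.
\]
Writing $z^{q+1}=z^{q}\kappa$ with $\kappa\in K^{[q]}$ gives $z^{q}(z-\kappa)=0$, so $z\in K^{[q]}+\ann_R(z^{q})$ for every $q=p^{e}$. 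The ascending chain $\ann_R(z)\subseteq\ann_R(z^{p})\subseteq\cdots$ stabilizes to some $A=\ann_R(z^{p^{E}})$; if $z$ is not nilpotent then $A\subsetneq R$, and passing to $\bar R=R/A$ the images satisfy $\bar z\in\bar K^{[q]}\subseteq\bar\m^{\,q}$ for all large $q$, because $K\subseteq\m$. Krull's intersection theorem then forces $\bar z=0$, i.e.\ $z^{p^{E}+1}=0$, contradicting non-nilpotence; so $z$ is nilpotent and is its own nonzero nilpotent multiple.

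The remaining, degenerate case $z\in(z+v)$ is where $K=R$ and the membership above collapses to something vacuous, and I expect this to be the main obstacle: here the nilpotent multiple must be produced by a direct local computation instead of the Krull argument. Writing $z=(z+v)w$, so that $z(1-w)=vw$, I would split on whether $w$ is a unit of $R$. If $w$ is a unit then $v=(w^{-1}-1)z$ is a nonzero multiple of $z$, and $vz=0$ gives $(w^{-1}-1)z^{2}=vz=0$, whence $v^{2}=(w^{-1}-1)\bigl[(w^{-1}-1)z^{2}\bigr]=0$. If instead $w\in\m$, then $1-w$ is a unit, so $z=(1-w)^{-1}vw$ lies in $(v)$, say $z=vt$, and then $z^{2}=t(vz)=0$. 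Either way $z$ acquires a nonzero square-zero, hence nilpotent, multiple, which finishes the proof.
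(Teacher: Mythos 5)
Your proof is correct, and while it opens with the same germ as the paper's --- perturb $z$ to $z+v$ with $v$ a nonzero annihilator of $z$, so that $(z+v)^q=z^q+v^q$ and hence $z^{q+1}\in(z^q)\cap((z+v)^q)$ --- it finishes by a genuinely different route. The paper chooses the perturbing element with care: after replacing $a$ by a multiple so that $P=\ann(a)$ is an associated prime, it takes $x\in P$ with $Q=\ann(x)$ maximal among annihilators of nonzero elements of $P$ (hence prime), proves the exact identity $(a^t)\cap(a^t+x^t)=Qa^t$ for every $t$, and compares $t=1$ (one application of the hypothesis, with $q=p$ only) against $t=p$ to get $Qa^p=Q^{[p]}a^p\subseteq\m Qa^p$, finishing with Nakayama and $a\in Q$. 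You instead take an arbitrary $v$, use the tautological identity $(z)\cap(z+v)=z\,((z+v):_Rz)=zK$, self-improve the hypothesis to all $q=p^e$ by an easy induction, deduce $z\in K^{[q]}+\ann(z^q)$ for every $q$, and finish with annihilator stabilization plus the Krull intersection theorem in $R/\ann(z^{p^E})$. The trade-offs: you need the upgrade to all $q$ (the paper uses only $q=p$) and an extra case split when $K=R$, i.e.\ $z\in(z+v)$ --- which you dispatch correctly and elementarily by producing a square-zero multiple by hand --- whereas the paper's associated-prime/maximal-annihilator setup guarantees its colon ideal is the proper prime $Q$ from the outset. Both arguments are sound; yours replaces Nakayama with Krull and trades the prime-ideal bookkeeping for a colon ideal.
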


\begin{proof}
Let $0 \neq a$ be a zero-divisor of $R$.  Assume it has no nonzero nilpotent multiple.  Let $P$ be an associated prime of $R$ that contains $\ann(a)$.  Without loss of generality (by passing to a multiple), we may assume $P=\ann(a)$.  Now let $Z := \{\ann x : 0 \neq x \in P\}$.  Then $Z$ is a nonempty set of proper ideals of $R$, so the fact that $R$ is Noetherian implies that $Z$ has a maximal element $Q$, which is then prime by the usual arguments.  Let $x\in P$ with $Q=\ann x$.  Then we have $P = \ann(a^t)$ (since $a$ is non-nilpotent) and $Q = \ann(x^t)$ (since $Q$ is maximal in $Z$) for all positive integers $t$.

Fix a positive integer $t$, and let $b=a^t$ and $y=x^t$.  Now let $g\in (b) \cap (b+y)$.  We have $g=cb=d\cdot (b+y)$ for some $c,d \in R$.  Thus, $(c-d)b = dy$, so $(c-d)b^2 = dby = 0$, whence $c-d \in \ann(b^2)=P = \ann(b)$.  Say $c=d+\pi$, $\pi \in P$.  Then we have $0=\pi b = (c-d)b = dy$, so $d \in \ann(y) = Q$.  We have shown that $(b) \cap (b+y) \subseteq Qb$.  Conversely, if $q\in Q$, then $qb = q(b+y) \in (b) \cap (b+y)$ since $qy=0$.  Thus, we have $(b) \cap (b+y) = Qb$.  In particular, choosing $t=1$ yields $(a) \cap (a+x) = Qa$, whence $(a^p) \cap ((a+x)^p) = ((a) \cap (a+x))^{[p]} = (Qa)^{[p]} = Q^{[p]} a^p$.  On the other hand, choosing $t=p$ yields $(a^p) \cap ((a+x)^p) = (a^p) \cap (a^p +x^p) = Qa^p$.  Thus, $Qa^p = Q^{[p]} a^p \subseteq \m Q a^p$, so that by the Nakayama lemma, $Qa^p = 0$.  But $a\in Q$, so $a^{p+1} = 0$, whence $a$ is nilpotent, which is a contradiction.
\end{proof}

We are ready to prove the main theorem.

\begin{proof}[Proof of Main Theorem]
By the Kunz regularity criterion \cite{Kunz-regp}, $R$ is regular iff the maps $f_q: R \rightarrow R$ given by $a \mapsto a^q$, $q$ a power of $p$, are flat, iff $f_p$ is flat.    Moreover, we have $f_q(I)R = I^{[q]}$.  Hence, the equivalence of (1) and (4) follows directly from Bourbaki flatness. Moreover,  the implication (\ref{it:mainreg}) $\implies$ (\ref{it:mainwOR}) follows from the well-known fact that for any flat $R$-module $M$, we have $IM \cap JM = (I \cap J) M$ for any ideals $I$, $J$ of $R$ (cf. the proof of Theorem~\ref{thm:HJflat}), plus a trivial induction step.  The implication (\ref{it:mainwOR}) $\implies$ (\ref{it:captwo}) is obvious.  Hence, we need only prove (\ref{it:captwo}) $\implies$ (\ref{it:mainreg}).

Assume $R$ satisfies (\ref{it:captwo}), and suppose the implication (\ref{it:captwo}) $\implies$ (\ref{it:mainreg}) holds when $R$ is local.  Since both finite intersections and bracket powers commute with localization at maximal ideals
, and since any localization of a reduced ring is reduced, we have that (\ref{it:captwo}) holds for the reduced local ring $R_\m$ for any maximal ideal $\m$.  Thus by assumption, $R_\m$ is a regular local ring, so $R$ is regular since $\m$ was arbitrary.  Thus, we are reduced to the local case.

If $R$ is an integral domain and (\ref{it:captwo}) holds, then since $R^{1/p}$ is a torsion-free $R$-module, the Frobenius is flat by Lemma~\ref{lem:domain}, and we are done by the Kunz regularity criterion.

It remains to show that when $(R,\m)$ is local and (\ref{it:captwo}) holds, $R$ is an integral domain. But this follows from Lemma~\ref{lem:zd} since $R$ is reduced.
\end{proof}

The obvious question now is: Do the equivalences in the Main Theorem still hold when $R$ is nonreduced?  The answer is no.

\begin{example}\label{ex:dualnumbers}
Let $R = k[x]/(x^2)$, where $k$ is a field and $x$ an indeterminate over $k$. Clearly $R$ is local and Noetherian but not regular, since $\dim R=0$ but the maximal ideal requires a nonzero generator.  However, let $I$, $J$ be nonzero ideals of $R$.  Since the 3 ideals of $R$ are linearly ordered, without loss of generality we have $J \subseteq I$.  Thus, $I^{[q]} \cap J^{[q]} = J^{[q]} = (I \cap J)^{[q]}$.
\end{example}

\begin{rem}[Ohm-Rush content and Frobenius roots]
Recall that for a commutative ring $R$, a module $M$ is \emph{Ohm-Rush} \cite{OhmRu-content, nmeSh-OR} or \emph{weakly intersection flat for ideals} \cite{HoJe-flatness}, if for all collections $\{I_\alpha\}$ of ideals of $R$, we have $\bigcap_\alpha (I_\alpha M) = (\bigcap_\alpha I_\alpha)M$.  We prefer the former term as it was Ohm and Rush who first explored this property, albeit using problematic terminology.  The condition on a Noetherian local ring of positive prime characteristic that the Frobenius itself is a flat Ohm-Rush module is an important condition in tight closure theory (cf. \cite{HHbase, Ka-param, Sha-nonred}).  Indeed, we investigate the stronger property of \emph{F-intersection flatness} in \cite{nme-exctest}, proving e.g. that if $A$ is a regular local ring of prime characteristic and $W$ is a multiplicative subset of $B=A[x_1, \ldots, x_n]$, then $W^{-1}B$ is F-intersection flat.

Given an ideal $J$ and a power $q$ of $p$, the Ohm-Rush property of the Frobenius allows one to find a \emph{unique smallest} ideal $I$ such that $J \subseteq I^{[q]}$, which is known as $J^{[1/q]}$ (e.g. see \cite[Definition 9.5]{Sha-nonred}).  In particular, we intersect all the ideals that have this property.  But this intersection is just the Ohm-Rush content of the ideal \cite{OhmRu-content, nmeSh-OR} via the iterated Frobenius endomorphism, so the Ohm-Rush property itself is enough to guarantee such a minimal member.  Hence it is natural to consider (unlike in \cite{nme-exctest}, since F-intersection flatness already implies flatness) whether the weaker condition of being an Ohm-Rush module is enough to force flatness of the Frobenius, and hence regularity.
We see above that the answer is \emph{yes} for reduced rings, but \emph{no} otherwise.  
It might be interesting to characterize the class of (nonreduced) rings for which the Ohm-Rush property holds for the Frobenius endomorphism, of which the above example is a member.  This will have no bearing on tight closure theory in particular (since one can compute tight closure modulo minimal primes), but it is of fundamental interest in prime characteristic algebra more generally.
\end{rem}

When $R$ is complete, finite intersection of ideals commuting with bracket powers is enough to ensure the property for arbitrary intersections of ideals (compare \cite[Proposition 5.7(e)]{HoJe-flatness} and \cite[Proposition 5.3]{Ka-param}, where  flatness of the Frobenius is assumed):

\begin{prop}
Let $R$ be a complete Noetherian local ring of prime characteristic $p>0$. Suppose that for any ideals $I, J$ of $R$, we have $I^{[p]} \cap J^{[p]} = (I \cap J)^{[p]}$.  Then the Frobenius endomorphism $f: R \ra R$ is Ohm-Rush.
\end{prop}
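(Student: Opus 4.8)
The plan is to unwind the Ohm-Rush condition concretely. Viewing the target of $f$ as an $R$-module $M$ via $f$, we have $IM = I^{[p]}$ for every ideal $I$, so asserting that $f$ is Ohm-Rush means exactly that $\bigcap_\alpha I_\alpha^{[p]} = (\bigcap_\alpha I_\alpha)^{[p]}$ for every family $\{I_\alpha\}$ of ideals. The first move is to reduce to a downward-directed family: replacing $\{I_\alpha\}$ by the family $\mathcal{S}$ of all of its finite intersections changes neither $\bigcap_\alpha I_\alpha$ nor $\bigcap_\alpha I_\alpha^{[p]}$, the latter because the hypothesis gives $(I_{\alpha_1} \cap \cdots \cap I_{\alpha_k})^{[p]} = I_{\alpha_1}^{[p]} \cap \cdots \cap I_{\alpha_k}^{[p]}$. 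Thus it suffices to prove, for a family $\mathcal{S}$ of ideals closed under finite intersection with $c := \bigcap_{I \in \mathcal{S}} I$, that $\bigcap_{I \in \mathcal{S}} I^{[p]} = c^{[p]}$.

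The inclusion $c^{[p]} \subseteq \bigcap_{I} I^{[p]}$ is immediate from monotonicity of bracket powers. For the reverse I would fix $z \in \bigcap_I I^{[p]}$ and exploit that $c^{[p]}$, like every ideal of a Noetherian local ring, is $\m$-adically closed: $c^{[p]} = \bigcap_n (c^{[p]} + \m^n)$ by Krull's intersection theorem. So it is enough to show $z \in c^{[p]} + \m^n$ for every $n$. Two elementary facts handle the bookkeeping: the identity $(c + \m^n)^{[p]} = c^{[p]} + (\m^n)^{[p]}$ (bracket powers are computed on generators and distribute over sums), together with $(\m^n)^{[p]} \subseteq \m^{np} \subseteq \m^n$. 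Consequently, if for the given $n$ I can produce a \emph{single} $I \in \mathcal{S}$ with $I \subseteq c + \m^n$, then $z \in I^{[p]} \subseteq (c + \m^n)^{[p]} \subseteq c^{[p]} + \m^n$, as needed.

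Everything therefore reduces to the following approximation statement, which is where completeness must enter: \emph{if $\mathcal{S}$ is a downward-directed family of ideals in a complete Noetherian local ring with $\bigcap_{I \in \mathcal{S}} I = 0$, then for each $n$ some $I \in \mathcal{S}$ satisfies $I \subseteq \m^n$} (I would apply this inside $R/c$, which is again complete local). I would prove it in two steps. First, extract a countable descending chain $L_1 \supseteq L_2 \supseteq \cdots$ from $\mathcal{S}$ with $\bigcap_k L_k = 0$: for each $k$ choose $I_k \in \mathcal{S}$ whose image $\bar{I_k}$ is least in the Artinian ring $R/\m^k$ (possible by the descending chain condition together with directedness), and pick $L_k \in \mathcal{S}$ lying below $L_{k-1} \cap I_1 \cap \cdots \cap I_k$. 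Then for any $u \in \bigcap_k L_k$ and any $m$ we have $u \in L_m \subseteq I_m$, so minimality of $\bar{I_m}$ forces $\bar u \in \bar{I_m} \subseteq \bar I$ for every $I \in \mathcal{S}$; hence $u \in \bigcap_k \bigcap_{I \in \mathcal{S}}(I + \m^k) = \bigcap_{I \in \mathcal{S}} I = 0$ by Krull. Second, apply Chevalley's theorem to $\{L_k\}$: in a \emph{complete} Noetherian local ring, a descending sequence of ideals meeting in $0$ is cofinal with the $\m$-adic filtration, so $L_k \subseteq \m^n$ for $k \gg 0$, and any such $L_k$ is the desired member of $\mathcal{S}$.

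The main obstacle is precisely this last approximation lemma, and the delicate point is that $\mathcal{S}$ may be uncountable, so Chevalley's theorem does not apply to it verbatim. The countable-reduction step is the technical heart of the argument; note that it uses only Noetherianity, whereas completeness is confined to the invocation of Chevalley's theorem — matching the expectation that completeness is exactly the ingredient that upgrades the finite-intersection hypothesis to arbitrary intersections.
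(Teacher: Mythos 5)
Your argument is correct and follows essentially the same route as the paper's proof: reduce the arbitrary intersection to a countable descending chain (the paper cites Katzman's argument for this step, which is the same Artinian-quotient trick you carry out by hand), invoke Chevalley's lemma in the complete local ring to get $\m$-adic cofinality, and finish with the identity $(c+\m^n)^{[p]} = c^{[p]} + (\m^n)^{[p]}$ and the Krull intersection theorem. The only cosmetic difference is that the paper phrases Ohm--Rush via the existence of a smallest ideal $I$ with $x \in I^{[p]}$ for each fixed $x$, whereas you work directly with an arbitrary family of ideals; the two formulations are equivalent and the underlying argument is identical.
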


\begin{proof}
We need to show that for any $x\in R$, there is a unique smallest ideal $I$ with $x \in I^{[p]}$. 

Let $\{I_\alpha\}_{\alpha \in \Lambda}$ be the set of ideals $J$ with $x \in J^{[p]}$, indexed by the index set $\Lambda$.  Set $c(x) := \bigcap \{I_\alpha : \alpha \in \Lambda\}$. By the argument in the proof of \cite[Proposition 5.3]{Ka-param}, there is a countable subset $\{\alpha_i : i \in \N\}$ of $\Lambda$ such that $c(x) = \bigcap_{i=1}^\infty I_{\alpha_i}$.  For each $i\in \N$, set $J_i := \bigcap_{h=1}^i I_{\alpha_h}$.  Then this is a decreasing sequence of ideals whose intersection is $c(x)$.  Moreover, for each $i$, we have $x \in \bigcap_{h=1}^i I_{\alpha_h}^{[p]} = (\bigcap_{h=1}^i I_{\alpha_h})^{[p]} = J_i^{[p]}$, where the first equality follows from the assumption on pairs of ideals.  Chevalley's lemma \cite[Lemma 7]{Che-local} then guarantees that for each $n\in \N$, there is some $i(n) \in \N$ such that $J_{i(n)} \subseteq \m^n + c(x)$.  Thus, for each $n$, we have \[
x \in \bigcap_n (J_{i(n)}^{[p]}) \subseteq \bigcap_n (\m^n + c(x))^{[p]} = \bigcap_n (\m^{[p]})^n + c(x)^{[p]} = c(x)^{[p]},
\]
with the last equality by the Krull intersection theorem.  Since $x \in c(x)^{[p]}$ and $c(x)$ is contained in all ideals $J$ with $x \in J^{[p]}$, it follows that $c(x)$ is the unique smallest such ideal.
\end{proof}

Recall that the \emph{Frobenius closure} $I^F$ of an ideal $I$ in a prime characteristic ring consists of all those elements $x\in R$ such that there exists some $n\in \N$ with $x^{p^n} \in I^{[p^n]}$.  Recall that an \emph{F-pure} ring has the property that every ideal is Frobenius closed (and is characterized by the property that every \emph{submodule} of every finite module is Frobenius-closed).  F-purity is considered to be a much weaker property than regularity.
.

Example~\ref{ex:dualnumbers} indicates that there might be a connection between the properties in the Main Theorem for non-reduced rings and the regularity property for the reduced structure of the ring.  Indeed this is so, in the presence of F-purity.

\begin{prop}
Let $R$ be a commutative Noetherian ring of prime characteristic $p>0$ such that \begin{enumerate}
    \item $(I \cap xR)^{[p]} = I^{[p]} \cap x^pR$ for all ideals $I$ and $x\in R$, and 
    \item $R_{red}$ is F-pure.
\end{enumerate}
Then $R_{red}$ is regular.
\end{prop}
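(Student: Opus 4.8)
The plan is to verify that the reduced ring $S := R_{red}$ itself satisfies condition (\ref{it:captwo}) of the Main Theorem, after which the Main Theorem (applicable since $S$ is reduced and Noetherian) immediately yields that $S$ is regular. Writing $\pi \colon R \to S = R/N$ for the quotient by the nilradical $N$, I fix an ideal $\bar I \subseteq S$ and an element $\bar x \in S$, lift them to $I := \pi^{-1}(\bar I) \supseteq N$ and to some $x \in R$ with $\pi(x) = \bar x$, and aim to prove $\bar I^{[p]} \cap (\bar x^p) \subseteq (\bar I \cap (\bar x))^{[p]}$ (the reverse inclusion being automatic, since the bracket power of an intersection always lies in the intersection of bracket powers).

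Two preliminary observations drive the argument. First, hypothesis (1) iterates: a short induction, applying (1) to the pair $(I^{[p^m]}, x^{p^m})$, gives $(I \cap xR)^{[p^m]} = I^{[p^m]} \cap x^{p^m}R$ for every $m \geq 1$. Second, since $R$ is Noetherian, $N$ is a finitely generated nil ideal, so there is an $e$ with $N^{[p^e]} = 0$; equivalently $n^{p^e} = 0$ for all $n \in N$.

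With these in hand, I would take $\bar g \in \bar I^{[p]} \cap (\bar x^p)$ and a lift $g \in R$, so that $g \in I^{[p]} + N$ and $g \in x^p R + N$. Raising to the $p^e$ power and using the characteristic-$p$ identity $(a+n)^{p^e} = a^{p^e} + n^{p^e}$ together with $n^{p^e} = 0$, the nilpotent contributions vanish: I obtain $g^{p^e} \in (I^{[p]})^{[p^e]} = I^{[p^{e+1}]}$ and simultaneously $g^{p^e} \in x^{p^{e+1}}R$. The iterated form of (1) then places $g^{p^e} \in I^{[p^{e+1}]} \cap x^{p^{e+1}}R = (I \cap xR)^{[p^{e+1}]}$. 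Pushing down through $\pi$ (which commutes with bracket powers and satisfies $\pi(I \cap xR) \subseteq \bar I \cap (\bar x)$) gives $\bar g^{\,p^e} \in (\bar I \cap (\bar x))^{[p^{e+1}]} = \bigl((\bar I \cap (\bar x))^{[p]}\bigr)^{[p^e]}$.

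The final step is where F-purity does its work. Setting $\bar K := (\bar I \cap (\bar x))^{[p]}$, the membership $\bar g^{\,p^e} \in \bar K^{[p^e]}$ says exactly that $\bar g$ lies in the Frobenius closure $\bar K^F$; since $S$ is F-pure, $\bar K^F = \bar K$, and therefore $\bar g \in (\bar I \cap (\bar x))^{[p]}$, as required. I expect the main obstacle to be the bookkeeping around the nilradical: bracket powers do not commute with passage to $R/N$ on the nose, and the decisive idea is that applying a high enough power of the Frobenius annihilates the nilpotent discrepancy and recasts the desired membership as a Frobenius-closure statement — precisely the statement that F-purity is designed to collapse.
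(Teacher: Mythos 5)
Your proposal is correct and follows essentially the same route as the paper's own proof: lift to $R$, raise to a Frobenius power high enough to annihilate the nilradical, apply the iterated form of hypothesis (1) to land in $(I \cap xR)^{[p^{e+1}]}$, push down to $R_{red}$, and use F-purity to strip off the Frobenius closure before invoking the Main Theorem. The only difference is cosmetic (element-level versus ideal-level bookkeeping of the nilpotent discrepancy), so there is nothing to add.
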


\begin{proof}
Let $\ia$ be an ideal of $R_{red}$ and $\alpha \in R_{red}$.  Let $I$ be an ideal of $R$ and $x \in R$ whose residues in $R_{red}$ are $\ia$ and $\alpha$ respectively.  Let $N$ be the nilradical of $R$.  Then there is some power $q$ of $p$ such that $N^{[q]} = 0$. 

Now let $\beta \in \ia^{[p]} \cap (\alpha^p)$.  Let $y\in R$ with residue class equal to $\beta$.  Then $y \in (I^{[p]} + N) \cap ((x^p) + N)$.  Then \[
y^q \in (I^{[p]} +N)^{[q]} \cap ((x^p)+N)^{[q]} = I^{[pq]} \cap (x^{pq}) = (I \cap (x))^{[pq]},
\]
where the last equality follows from (1) and induction on $\log_pq$.

It follows that $\beta^q \in (\ia \cap (\alpha))^{[pq]} = ((\ia \cap (\alpha))^{[p]})^{[q]}$, so that $\beta \in ((\ia \cap (\alpha))^{[p]})^F$.  But every ideal in $R_{red}$ is Frobenius closed, so $\beta \in (\ia \cap (\alpha))^{[p]}$.  Thus, $\ia^{[p]} \cap (\alpha^p) \subseteq (\ia \cap (\alpha))^{[p]}$, whence equality holds. Then by the Main Theorem, $R_{red}$ is regular. 
\end{proof}

However, the converse is false.  To see this, let $R=k[x,y,z,w] / (x^{p+1}, x^p z - y^p w, x y, y^{p+1})$.  Then $(x) \cap (y) = (x^p z)$, so $((x) \cap (y))^{[p]} = 0$, but $x^p z \in (x^p) \cap (y^p)$.  On the other hand, it is well known that the Frobenius on $R_{red} \cong k[z,w]$ is F-intersection flat, hence Ohm-Rush.  Thus, the Ohm-Rush property of the Frobenius on nonreduced rings remains mysterious.

\section*{Acknowledgment}
The author thanks Craig Huneke for comments on the proof of Lemma~\ref{lem:zd} and Wenliang Zhang for making the author aware of Zhang's previous contribution.

\providecommand{\bysame}{\leavevmode\hbox to3em{\hrulefill}\thinspace}
\providecommand{\MR}{\relax\ifhmode\unskip\space\fi MR }
\providecommand{\MRhref}[2]{%
  \href{http://www.ams.org/mathscinet-getitem?mr=#1}{#2}
}
\providecommand{\href}[2]{#2}

\end{document}